\newcommand{\margnote}[1]{
\ifthenelse{\boolean{shownotes}}%
{\marginpar{\raggedright\tiny\texttt{#1}}}%
{}%
}
\newcommand{\hole}[1]{
\ifthenelse{\boolean{shownotes}}%
{\begin{center} \fbox{ \rule {.25cm}{0cm}
\rule[-.1cm]{0cm}{.4cm} \parbox{.85\textwidth}{\begin{center}
\texttt{#1}\end{center}} \rule {.25cm}{0cm}}\end{center}}
{}
}
\theoremstyle{plain}
\newtheorem{definition}{Definition}
\newtheorem{theorem}{Theorem}
\newtheorem{lemma}{Lemma}
\newtheorem{assumption}{Assumption}
\theoremstyle{remark}
\newtheorem{remark}{Remark}
\newcommand{\eps}{\varepsilon}
\newcommand{\NN}{\mathbb{N}}
\newcommand{\RR}{\mathbb{R}}
\newcommand{\cc}[1]{\mathcal{#1}}
\renewcommand{\ss}[1]{\mathscr{#1}}
\newcommand{\wave}[1]{\widetilde{#1}}
\newcommand{\setof}[2]{\left\{\left.\vphantom{#2} #1 \;\right|\,\; #2 \right\}}
\renewcommand{\and}{\; , \;}
\newcommand{\conv}{\mathop{\mathrm{conv}}\nolimits}
\newcommand{\Le}{\mathscr{L}}
\newcommand{\weakto}{\rightharpoonup}
\newcommand{\Linf}{L^\infty}
\newcommand{\dist}{\mathop{\mathrm{dist}}\nolimits}
\newcommand{\inte}{\mathop{\mathrm{int}}\nolimits}
\renewcommand{\d}{\partial}
\renewcommand{\div}{\mathop{\mathrm{div}}\nolimits}
\newcommand{\rb}[1]{\left( #1 \right)}
\subjclass[2010]{Primary: 35F10, Secondary: 35A02.}
\keywords{Transport and continuity equations, Non-uniqueness, Non-conservation of energy, Renormalization, Convex integration.}
\begin{document}

\title[Non-uniqueness and prescribed energy]{Non-uniqueness and prescribed energy \\ for the continuity equation}

\author[Crippa]{Gianluca Crippa}
\address[G. Crippa]{
Departement Mathematik und Informatik, Universit\"at
  Basel\\ Rheinsprung 21 \\CH-4051 Basel \\Switzerland}
\email[]{\href{gianluca.crippa@unibas.ch}{gianluca.crippa@unibas.ch}}

\author[Gusev]{Nikolay Gusev}
\address[N. Gusev]{
Dybenko st., 22/3, 94 \\ 125475 Moscow \\ Russia}
\email[]{\href{n.a.gusev@gmail.com}{n.a.gusev@gmail.com}}

\author[Spirito]{Stefano Spirito}
\address[S. Spirito]{
GSSI - Gran Sasso Science Institute\\ Viale Francesco Crispi 7\\ 67100 L'Aquila\\ Italy}
\email[]{\href{stefano.spirito@gssi.infn.it}{stefano.spirito@gssi.infn.it}}

\author[Wiedemann]{Emil Wiedemann}
\address[E. Wiedemann]{
Hausdorff Center for Mathematics and Mathematical Institute, Universit\"at Bonn\\
Endenicher Allee 60\\53115 Bonn\\Germany}
\email[]{\href{emil.wiedemann@hcm.uni-bonn.de}{emil.wiedemann@hcm.uni-bonn.de}}

\begin{abstract}
In this note we provide new non-uniqueness examples for the continuity equation
by constructing infinitely many weak solutions with prescribed energy.
\end{abstract}

\maketitle

\section{Introduction}
In this paper we consider the \emph{continuity equation} for a bounded scalar function $u\colon \RR \times \RR^d \to \RR$ with a bounded divergence-free vector field $\bold b \colon \RR \times \RR^d \to \RR^d$:
\begin{gather}
\d_t u + \div (u \bold b) = 0, \label{continuity} \\
\div \bold b = 0. \label{div-free}
\end{gather}
This equation appears in various problems of mathematical physics, in particular fluid mechanics and kinetic theory.
In the smooth setting (and assuming suitable integrability) the \emph{energy}
\begin{equation*}
{\mathscr E}(t) := \int_{\RR^d} u^2(t,x)\, dx
\end{equation*}
of the solution $u$ is conserved:
\begin{equation}\label{conservation}
\frac{d}{dt} {\mathscr E} (t) = 0.
\end{equation}
Indeed, since $\bold b$ is divergence-free, by multiplying \eqref{continuity} with $u$, using the chain rule and integrating over $\RR^d$ one immediately obtains \eqref{conservation}.

In many applications one has to study \eqref{continuity} in a nonsmooth setting. Roughly speaking, since \eqref{continuity} is linear,
the conservation of energy \eqref{conservation} 
implies uniqueness of weak solutions to the corresponding initial-value problem for \eqref{continuity}. In fact, conservation of energy is a consequence of the so-called \emph{renormalization property}, which was proved by \cite{DPL} for any vector field $\bold b$ with Sobolev regularity and later extended by Ambrosio \cite{A2004} to the case when $b$ has bounded variation. We refer to \cite{DLB,HW} for a detailed review of recent results in this direction.

On the other hand, when the regularity of the vector field $\bold b$ is too low, the conservation of energy~\eqref{conservation} fails in general. In a nonsmooth setting several counterexamples to the uniqueness, and therefore to the conservation of energy, are known, see \cite{A, CLR2003, Depauw, ABC1, ABC2}. A similar phenomenon occurs in the context of the Euler equations. For example, in the papers \cite{Sch,Shn,DlSz2008DI} weak solutions of the Euler equations were constructed with compact support in space time.

In particular the example in \cite{Depauw} gives
a bounded vector field $\bold b$ and a bounded scalar field $u$ which satisfy \eqref{continuity} and \eqref{div-free} such that
\begin{equation}\label{defect}
{\mathscr E}(t) = \left\{ \begin{array}{ll}
0 & \text{ for $t \leq 0$} \\
1 & \text{ for $t>0$.}
\end{array}\right.
\end{equation}

In this paper, for any given nonnegative bounded function $E\colon \RR \to \RR$ which is continuous on an open interval and zero outside we construct infinitely many pairs $(\bold b, u)$ satisfying \eqref{continuity} and \eqref{div-free}, such that ${\mathscr E}(t) = E(t)$
for a.e.~$t$. Thus, in contrast with~\eqref{defect}, we provide more general profiles for the energy.
%
%
%
%
%
%
%
%
Our results are also connected to the chain rule problem for the divergence operator, see \cite{ADM,BG,CGSWRD}.

We construct such pairs $(\bold b , u)$ using the method of convex integration, and our techniques are similar to the ones used in \cite{DlSz2008DI,Sz2012RPM}. The latter reference contains an appendix giving a general framework for convex integration, but
for the problem at hand we need to consider a nonlinear constraint that depends on the points in the domain (as was the case e.g.~in \cite{DlSz2010AC}, albeit in a different functional setting). For this reason we adapt the framework from \cite{Sz2012RPM} to this more general situation (see \S\ref{framework}). We then apply this abstract framework to the specific situation of the continuity equation (see \S\ref{application}).

Finally, let us mention \cite{Cordoba2011,Shvydkoy,Lopes}, where results were obtained by convex integration, respectively, that yield as a byproduct counterexamples to the energy conservation for continuity equations. However, in these works the energy profile is always piecewise constant. 

\subsection*{Acknowledgements} This research has been partially supported by the SNSF grants 140232 and 156112. This work was started while the third author was a PostDoc at the Departement Mathematik und Informatik of the Universit\"at Basel. He would like to thank the department for the hospitality and the support. The second author was partially supported by the Russian Foundation for Basic Research, project no.~13-01-12460. The authors are grateful to S.~Bianchini, C.~De Lellis, and L.~Sz\'{e}kelyhidi for the fruitful discussions about the topic of the paper.

\section{Differential inclusions with non-constant nonlinear constraint}\label{framework}
We start with the so-called Tartar framework (cf.~e.g.~\cite{DlSz2008DI}).
Consider a system of $m$ linear partial differential equations
\begin{equation}\label{L}
\sum_{i=1}^D A_i \d_i z = 0
\end{equation}
in an open set $\ss D\subset \RR^D$, where $A_i$ are constant $m\times n$ matrices and $z\colon \ss D \to \RR^n$. Consider a nonlinear constraint
\begin{equation}\label{N}
z(y)\in K_y \quad
\end{equation}
for a.e. $y$ in $\ss D$,
where $K_y\subset \RR^n$ is a compact set for any $y\in \ss D$.


For any $y\in \ss D$ let $U_y:= \inte \conv K_y$, where with $\conv$ we denote the convex hull of the set $K_y$ and with  $\inte$ we denote its interior. 
Let $\ss U \subset \ss D$ be a bounded open set.

\begin{definition}[Subsolutions]
We say that $z\in L^2(\ss D)$ is a \emph{subsolution} of \eqref{L}, \eqref{N} if $z$ is a weak solution of \eqref{L} in $\ss D$, $z$ is continuous on $\ss U$, \eqref{N} holds for a.e. $y\in \ss D \setminus \ss U$ and
\begin{equation}\label{N-relaxed}
z(y) \in U_y
\end{equation}
for any $y\in \ss U$.
\end{definition}
\begin{definition}[Localized plane waves/wave cone]\label{def:wave cone}
A set $\Lambda  \subset \RR^n$ is called wave cone if there exists a constant $C>0$ such that for any $\bar z \in \Lambda$ there exists a sequence $w_k\in C_0^\infty(B_1(0);\RR^n)$ solving \eqref{L} in $\RR^D$ such that
\begin{itemize}
\item $\dist(w_k(x),[-\bar z, \bar z])\to 0$ for all $x\in B_1(0)$ uniformly as $k\to \infty$,
\item $w_k \weakto 0$ in $L^2$ as $k\to \infty$,
\item $\int |w_k|^2 \, dy > C |\bar z|^2$ for all $k\in \NN$.
\end{itemize}
\end{definition}
In the above definition we denoted the segment with endpoints $x\in \RR^n$ and $y\in \RR^n$ with $[x,y]:=\conv\{x,y\}$.
The functions $w_k$ are called \emph{localized plane waves}.
We make the following assumptions:
\begin{assumption}\label{a:wc}
There exists a wave cone $\Lambda$ dense in $\RR^n$.
\end{assumption}
Let $\ss K$ denote the set of all compact subsets of $\RR^n$, endowed with the Hausdorff metric $d_\cc{H}$. It is well-known that $\ss K$ is a complete metric space.
\begin{assumption}[Continuity of the nonlinear constraint]\label{a:Ky}
The map $f\colon \ss U \ni y \mapsto K_y \in \ss K$ is continuous and bounded in the Hausdorff metric.
\end{assumption}

Our main abstract result is the following:

\begin{theorem}\label{t:ci}
Suppose that Assumptions~\ref{a:wc} and \ref{a:Ky} hold. Suppose that $z_0$ is a subsolution of \eqref{L}, \eqref{N}. Then there exist infinitely many weak solutions $z\in L^2(\ss D)$ of \eqref{L} which agree with $z_0$ a.e. on $\ss D \setminus \ss U$ and satisfy \eqref{N} for a.e. $y\in \ss D$.
\end{theorem}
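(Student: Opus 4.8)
The plan is to run a Baire category argument in the spirit of the Tartar framework, adapting the standard convex integration scheme to the $y$-dependent constraint. First I would fix a radius $R$ with $K_y\subset B_R(0)$ for all $y\in\ss U$ (possible by the boundedness in Assumption~\ref{a:Ky}) and introduce the space
\[
X_0:=\cb{z \st z \text{ is a subsolution of }\eqref{L},\eqref{N}\text{ with }z=z_0\text{ a.e. on }\ss D\setminus\ss U}.
\]
On the bounded set $\cb{z \st \norm{z}_{L^\infty(\ss U)}\le R}$ the weak $L^2$ topology is metrizable; let $X$ be the closure of $X_0$ in this topology and let $d$ be a compatible metric. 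Then $(X,d)$ is a nonempty complete metric space containing $z_0$, and every $z\in X$ still satisfies \eqref{L}, agrees with $z_0$ off $\ss U$, and obeys the relaxed convex constraint $z(y)\in\overline{\conv K_y}$ a.e.\ (the latter being a pointwise convex, hence weakly closed, condition, measurable in $y$ by Assumption~\ref{a:Ky}).

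Next I would set up the engine of the scheme, a \emph{perturbation lemma}: there is a modulus $\gamma$ with $\gamma(t)>0$ for $t>0$ such that every $z\in X_0$ with $\int_{\ss U}\dist^2(z(y),K_y)\,dy\ge\alpha>0$ can be perturbed to another $\tilde z\in X_0$ with $\norm{\tilde z-z}_{L^2(\ss U)}^2\ge\gamma(\alpha)$ and $d(\tilde z,z)$ as small as desired. This is where the non-constant constraint enters and is the main obstacle. Using the continuity of $z$ on $\ss U$ together with the Hausdorff continuity of $y\mapsto K_y$ (Assumption~\ref{a:Ky}), I would cover $\ss U$ by finitely many small pieces on each of which both $z$ and $K_y$ are nearly constant, thereby reducing to a constant-constraint situation. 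On each piece, since the (nearly constant) value $z(y)$ lies in $\inte\conv K_y$, a convex-geometry argument (Carath\'eodory, together with the fact that the extreme points of $\overline{\conv K_y}$ lie in $K_y$) produces a direction $\bar z\in\Lambda$—available by the density in Assumption~\ref{a:wc}—with $\abs{\bar z}$ bounded below in terms of $\dist(z(y),K_y)$ and with $[z(y)-\bar z,z(y)+\bar z]\subset\conv K_y$. Adding a rescaled localized plane wave from Definition~\ref{def:wave cone} oscillating in this direction, with large frequency, keeps $\tilde z$ continuous, preserves \eqref{L}, is weakly small, and keeps its values inside the open set $U_y$ after a harmless shrinking by $(1-\eta)$ (since $\eta\,z(y)+(1-\eta)(z(y)\pm\bar z)\in\inte\conv K_y$); the lower bound $\int\abs{w_k}^2>C\abs{\bar z}^2$ then yields the asserted $L^2$ gap once summed over the pieces.

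With the perturbation lemma in hand the Baire argument is routine. The functional $I(z):=\int_{\ss U}\abs{z}^2\,dy$ is weakly lower semicontinuous, hence of Baire class one on $(X,d)$, so its set $C$ of continuity points is residual. I claim every $z\in C$ is an exact solution. Indeed, if $\int_{\ss U}\dist^2(z,K_y)\,dy\ge\alpha>0$, choose $z_j\in X_0$ with $d(z_j,z)\to0$; continuity of $I$ forces $\norm{z_j}_{L^2}\to\norm{z}_{L^2}$, which upgrades weak to strong convergence and gives $\int_{\ss U}\dist^2(z_j,K_y)\,dy\ge\alpha/2$ for large $j$. Applying the perturbation lemma to such $z_j$ produces $\tilde z_j\in X_0$ with $d(\tilde z_j,z)$ small yet $I(\tilde z_j)\ge I(z_j)+\tfrac12\gamma(\alpha/2)$ (the cross term $\ab{z_j,\tilde z_j-z_j}$ is negligible since $\tilde z_j-z_j$ is weakly small), contradicting continuity of $I$ at $z$. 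Hence $z(y)\in K_y$ a.e., so $z$ solves \eqref{L},\eqref{N}. Finally, since $(X,d)$ has no isolated points—any subsolution admits arbitrarily $d$-small, $L^2$-distinct plane-wave perturbations—the residual set $C$ is uncountable, producing infinitely many weak solutions of \eqref{L} that satisfy \eqref{N} a.e.\ and agree with $z_0$ on $\ss D\setminus\ss U$, as claimed.
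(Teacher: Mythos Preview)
Your proposal is correct and follows essentially the same approach as the paper: both set up the Baire category argument on the weak-$L^2$ closure of subsolutions, use the functional $I(z)=\int_{\ss U}|z|^2$ as a Baire-1 function, and derive the key perturbation lemma by combining the geometric segment lemma (your Carath\'eodory/extreme-point argument is the paper's Lemma~\ref{l:geom}), the Hausdorff continuity of $y\mapsto K_y$, density of $\Lambda$, and a finite covering. The only cosmetic differences are that the paper uses a Vitali covering by disjoint balls and an explicit stability lemma (Lemma~\ref{l:stab}) in place of your ``nearly constant on small pieces'' plus $(1-\eta)$-shrinking, and it phrases the final contradiction via $\tilde z_j\to z$ strongly rather than via the cross-term estimate---but these are equivalent.
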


\subsection{Geometric preliminaries}
The next lemma shows that compact subsets of the interior of the convex hull of a compact set $K$ are stable with respect to sufficiently small perturbations of $K$ in the Hausdorff metric.

\begin{lemma}\label{l:stab}
Let $K\subset \RR^n$ be a compact set. Then for any compact set $C\subset \inte \conv K$ there exists $\eps>0$ such that for any compact set  $K' \subset \RR^n$ with $d_{\cc H}(K,K')<\eps$ we have
\begin{equation*}
C\subset \inte \conv K' .
\end{equation*}
\end{lemma}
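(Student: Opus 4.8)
The plan is to reduce the claim to a pointwise statement about individual points of $C$ and then exploit support functions. First I would observe that, since $C$ is compact and contained in the open set $\inte \conv K$, the distance $d_0 := \dist(C, \RR^n \setminus \inte \conv K)$ is strictly positive. I fix any $\delta$ with $0 < \delta < d_0$, so that the closed ball $\overline B(c,\delta)$ is contained in $\conv K$ for every $c \in C$ (this uniformity in $c$ is exactly what compactness of $C$ buys us), and then I set $\eps := \delta/2$ and show this choice works.

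The key analytic tool is the comparison of support functions. For a nonempty compact set $A \subset \RR^n$ I write $h_A(\nu) := \sup_{x \in A} \ab{\nu, x}$, and recall that $h_A = h_{\conv A}$. If $d_{\cc H}(K,K') < \eps$, then $K \subset K' + \overline B(0,\eps)$ and conversely, whence $\abs{h_K(\nu) - h_{K'}(\nu)} < \eps$ for every unit vector $\nu$. (For $\eps$ small the set $K'$ is nonempty, so $\conv K'$ is a nonempty compact convex set and its support function is well defined.)

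Next I would prove, for each fixed $c \in C$, that $\overline B(c, \delta - \eps) \subset \conv K'$, which gives $c \in \inte \conv K'$ since $\delta - \eps = \delta/2 > 0$. Arguing by contradiction, suppose some $p$ with $\norm{p - c} \le \delta - \eps$ satisfies $p \notin \conv K'$. Since $\conv K'$ is compact and convex, the Hahn--Banach separation theorem yields a unit vector $\nu$ with $\ab{\nu, p} > h_{K'}(\nu)$. Combining $\ab{\nu,p} \le \ab{\nu,c} + \norm{p-c} \le \ab{\nu,c} + (\delta - \eps)$ with the support-function estimate gives $\ab{\nu, c} > h_{K'}(\nu) - (\delta - \eps) > h_K(\nu) - \delta$. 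On the other hand, the point $q := c + \delta \nu$ lies in $\overline B(c,\delta) \subset \conv K$, so $\ab{\nu, q} = \ab{\nu, c} + \delta \le h_K(\nu)$, that is $\ab{\nu, c} \le h_K(\nu) - \delta$, contradicting the previous inequality. Since $c \in C$ was arbitrary, every point of $C$ is interior to $\conv K'$, i.e.\ $C \subset \inte \conv K'$.

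The argument is essentially elementary, and I do not expect any serious obstacle. The only points requiring care are the uniformity of $\delta$ over $C$ (provided by compactness) and the correct bookkeeping in the separation step, where one must play the support function of $\conv K'$ against that of $\conv K$ through the Hausdorff bound; everything else is routine convex geometry.
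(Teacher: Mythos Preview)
Your argument is correct, and it is genuinely different from the paper's. The paper covers $C$ by finitely many open ``inner simplices'' whose outer simplices have vertices in $\conv K$; it then uses Carath\'eodory's theorem to write each vertex as a convex combination of at most $n+1$ points of $K$, replaces those points by nearby points of $K'$, and checks that the inner simplex stays inside the perturbed simplex and hence inside $\conv K'$. Your route instead works globally with support functions: the Hausdorff bound $d_{\cc H}(K,K')<\eps$ translates directly into $\abs{h_K(\nu)-h_{K'}(\nu)}<\eps$ for every unit $\nu$, and a single Hahn--Banach separation then shows $\overline B(c,\delta-\eps)\subset\conv K'$ for every $c\in C$. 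Your approach is shorter, avoids Carath\'eodory and the covering step, and even yields an explicit quantitative $\eps$ (half the distance from $C$ to $\d(\conv K)$); the paper's approach is more elementary-geometric in flavour but heavier in bookkeeping.
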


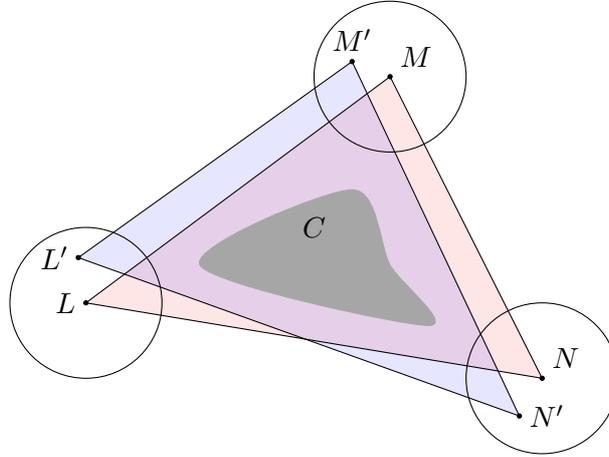
\begin{figure}[h!]
\caption{An illustration of Lemma~\ref{l:stab} in the case when $K=\{L,M,N\}$ and $K'=\{L',M',N'\}$.}
\begin{tikzpicture}
\coordinate(A) at (0,0) {};
\coordinate(B) at (4,3) {};
\coordinate(C) at (6,-1) {};
\coordinate(A') at (-0.1,0.6) {};
\coordinate(B') at (3.5,3.2) {};
\coordinate(C') at (5.7,-1.5) {};
\node[left] at (A){$L$};
\node[above right] at (B){$M$};
\node[above right] at (C){$N$};
\node[left] at (A'){$L'$};
\node[above] at (B'){$M'$};
\node[right] at (C'){$N'$};
\fill[red,opacity=0.1] (A) -- (B) -- (C) -- cycle;
\fill[blue,opacity=0.1] (A') -- (B') -- (C') -- cycle;
\draw (A)--(B)--(C)--cycle;
\draw (A')--(B')--(C')--cycle;
\filldraw [gray!70!white] plot [smooth cycle] coordinates {(1.5,0.5) (3.5,1.5) (4,0.5) (4.5,-0.3)};
\node at (3,1){$C$};
\draw (A) circle (1cm);
\draw (B) circle (1cm);
\draw (C) circle (1cm);
\fill (A) circle (1pt);
\fill (B) circle (1pt);
\fill (C) circle (1pt);
\fill (A') circle (1pt);
\fill (B') circle (1pt);
\fill (C') circle (1pt);
\end{tikzpicture}
\end{figure}

\begin{proof}[Proof.]
Since $\inte \conv K$ is open,
for any point $x\in C$ there exists a simplex $S_x$ with vertices $\{v_i\}_{i=1..n+1}\subset \conv K$ such that $x$ belongs to the inner open simplex 
\begin{equation*}
I_x:=\setof{\sum_{i=1}^{n+1} \lambda_i v_i}{\lambda_i\in \rb{\frac{1}{2(n+1)},\frac{2}{n+1}}, \; \sum_{i=1}^{n+1} \lambda_i=1, \; i=1..n+1}.
\end{equation*}
Since $C$ is a compact set and the inner simplices $\{I_x\}_{x\in C}$ cover $C$ we can extract a finite subcover $\{I_{x_k}\}_{k=1..m}$ of $C$. 
Let us fix $k\in 1..m$ and consider the simplex $S:=S_{x_k}$ with vertices $\{v_i\}_{i=1..n+1}\subset \conv K$. Let $I:=I_{x_k}$ denote the corresponding inner simplex.

If $\eps<\dist(\d I, \d S)$ then for any points $v_i'\in B_\eps(v_i)$, $i=1..n+1$ one has
\begin{equation}\label{iss}
I\subset \conv \{v_1', v_2', \dots, v_{n+1}'\}.
\end{equation}
Observe that for any $\eps>0$ and $i=1..n+1$ the ball $B_\eps(v_i)$ contains a point $v_i'\in\conv K'$.
Indeed, by Caratheodory's theorem $v_i= \sum_{j=1}^{n+1} \mu_j z_j$ for some $z_j \in K$ and $\mu_j \in [0,1]$ with $\sum_{j=1}^{n+1}\mu_j=1$. Since $d_\cc{H}(K,K')<\eps$ there exist points $z_j'\in K'$ such that $z_j'\in B_\eps(z_j)$, where $j=1..n+1$. Let 
\begin{equation*}
v_i':=\sum_{j=1}^{n+1} \mu_j z_j',
\end{equation*}
then $|v_i - v_i'| \le \sum_{j=1}^{n+1} \mu_j |z_j - z_j'| < \eps$. Hence by \eqref{iss} we have $I\subset \conv \{v_1', v_2', \dots, v_{n+1}'\}$ provided that $\eps$ is small enough. But $v_i'\in\conv K'$, hence $I \subset \conv K'$. Since $I$ is open we can also write $I \subset \inte \conv K'$.

Since we have finitely many simplices, we can choose $\eps>0$ in such a way that the inclusion $I_{x_k}\subset \inte \conv K'$ holds for any $k=1..m$ (provided that $d_\cc{H}(K,K')<\eps$). Then
\begin{equation*}
C \subset \cup_{k=1..m} I_{x_k} \subset \inte \conv K'. \qedhere
\end{equation*}
\end{proof}


We will also need the following elementary lemma:

\begin{lemma}\label{l:cont}
Suppose that $z\in C(\ss U; \RR^n)$ where $\ss U\subset \RR^D$ is an open set. Suppose that for any $y\in \ss U$ we have a compact set $K_y\subset \RR^n$ and the function $y\mapsto K_y$ is continuous in the Hausdorff metric. Then the function $F\colon y \mapsto \dist(z(y),K_y)$ is continuous on $\ss U$.
\end{lemma}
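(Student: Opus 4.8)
The plan is to prove continuity of $F$ at an arbitrary point $y_0 \in \ss U$ by controlling $\abs{F(y) - F(y_0)}$ in terms of the two independent sources of variation: the motion of the point $z(y)$ and the motion of the compact set $K_y$. The key observation is that the set-distance function is Lipschitz in each of its two arguments separately, and I would isolate this as two elementary estimates before combining them.

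First I would record that, for a fixed compact set $K \subset \RR^n$, the map $p \mapsto \dist(p, K)$ is $1$-Lipschitz: for any $p, p' \in \RR^n$ one has $\abs{\dist(p, K) - \dist(p', K)} \le \abs{p - p'}$, which follows directly from the triangle inequality applied to the infimum defining $\dist(\cdot, K)$. Second, I would show that for a fixed point $p$ the map $K \mapsto \dist(p, K)$ is $1$-Lipschitz with respect to the Hausdorff metric. To see this, pick $q \in K$ realizing $\abs{p - q} = \dist(p, K)$ and then $q' \in K'$ with $\abs{q - q'} \le d_{\cc H}(K, K')$; this gives $\dist(p, K') \le \abs{p - q'} \le \dist(p, K) + d_{\cc H}(K, K')$, and the symmetric argument yields $\abs{\dist(p, K) - \dist(p, K')} \le d_{\cc H}(K, K')$.

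Combining these two bounds through the triangle inequality, for any $y, y_0 \in \ss U$ I would obtain
\[
\abs{F(y) - F(y_0)} \le \abs{\dist(z(y), K_y) - \dist(z(y_0), K_y)} + \abs{\dist(z(y_0), K_y) - \dist(z(y_0), K_{y_0})} \le \abs{z(y) - z(y_0)} + d_{\cc H}(K_y, K_{y_0}).
\]
The conclusion then follows immediately: as $y \to y_0$, continuity of $z$ forces $\abs{z(y) - z(y_0)} \to 0$, while continuity of $y \mapsto K_y$ in the Hausdorff metric forces $d_{\cc H}(K_y, K_{y_0}) \to 0$, whence $F(y) \to F(y_0)$.

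There is no substantial obstacle here, as the statement is elementary. The only point requiring care is the second Lipschitz estimate, where one must correctly pass from the minimizer $q \in K$ to a nearby point $q' \in K'$ using the very definition of $d_{\cc H}$, and check \emph{both} inequalities in order to control the absolute value. Everything else reduces to routine applications of the triangle inequality.
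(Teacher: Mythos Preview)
Your proof is correct and follows essentially the same approach as the paper: both view $F$ as the composition $y \mapsto (z(y), K_y) \mapsto \dist(z(y), K_y)$ and use continuity of each factor. The paper simply asserts that $(p,K) \mapsto \dist(p,K)$ is continuous on $\RR^n \times \ss K$ and invokes composition, whereas you supply the explicit $1$-Lipschitz estimates in each argument that justify that assertion.
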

\begin{proof}
One can prove directly that the function $(z,K)\mapsto \dist(z,K)$ is continuous on $\RR^n\times \ss K$. The function $y\mapsto (z(y), K_y)$ is continuous in view of the assumptions. Hence the function $F$ is continuous as a composition of continuous functions.
\end{proof}

In general the distance from a point $z$ to a compact set $K$ does not control from below the distance from $z$ to the boundary of $\conv K$. However the following lemma shows that there exists a segment inside $\inte \conv K$ with midpoint $z$ and length controlled from below by $\dist(z,K)$:
\begin{lemma}[Geometric lemma]\label{l:geom}
Let $K\subset \RR^n$ be a compact set. For any $z\in \inte \conv K$ there exists $\bar z\in \RR^n$ such that 
\begin{itemize}
\item $[z-\bar z, z+ \bar z] \subset \inte \conv K$
\item $|\bar z| \ge \frac{1}{2n} \dist(z,K)$ 
\end{itemize}
\end{lemma}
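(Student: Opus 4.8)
The plan is to produce the vector $\bar z$ explicitly from a Carathéodory representation of $z$ and then to shrink it by a factor of two in order to pull the endpoints of the segment into the open set $\inte \conv K$. First I would dispose of the trivial case: if $\dist(z,K)=0$ then $z\in K$ (as $K$ is compact, hence closed), and the choice $\bar z=0$ satisfies both requirements, since $\{z\}\subset \inte \conv K$ by hypothesis. So from now on I assume $r:=\dist(z,K)>0$. Since $z\in \conv K$, Carathéodory's theorem gives a representation $z=\sum_{i=1}^{N}\lambda_i v_i$ with $N\le n+1$, all $v_i\in K$, all $\lambda_i>0$ and $\sum_{i=1}^N\lambda_i=1$. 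Writing $u_i:=v_i-z$, I record the two facts that drive the estimate: $|u_i|=|v_i-z|\ge \dist(z,K)=r$ for every $i$, and $\sum_{i=1}^N\lambda_i u_i=0$.

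The core of the argument is to move mass off a single, well-chosen vertex. Let $m$ be an index with $\lambda_m=\max_i\lambda_i$, so that $\lambda_m\ge 1/N\ge 1/(n+1)$. Set $t:=\min\{1,\lambda_m/(1-\lambda_m)\}$ and define the coefficients $\alpha_m:=-t(1-\lambda_m)$ and $\alpha_i:=t\lambda_i$ for $i\ne m$. By construction $\sum_i\alpha_i=0$ and $|\alpha_i|\le\lambda_i$ for every $i$ (the bound for $i=m$ is exactly the definition of $t$). Putting $\bar w:=\sum_i\alpha_i u_i$, the vanishing of $\sum_i\alpha_i$ gives $\bar w=\sum_i\alpha_i v_i$, so that $z\pm\bar w=\sum_i(\lambda_i\pm\alpha_i)v_i$ are convex combinations of the $v_i$ (the coefficients are nonnegative because $|\alpha_i|\le\lambda_i$, and they sum to one); hence $z\pm\bar w\in\conv K$. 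Moreover, using $\sum_i\lambda_i u_i=0$ one computes $\bar w=-t\,u_m$, whence $|\bar w|=t|u_m|\ge \tfrac1n|u_m|\ge \tfrac{r}{n}$, since $\lambda_m\ge 1/(n+1)$ forces $\lambda_m/(1-\lambda_m)\ge 1/n$ and therefore $t\ge 1/n$.

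Finally I would upgrade membership in $\conv K$ to membership in the interior by halving: set $\bar z:=\tfrac12\bar w$. Since $z\in\inte\conv K$ and $z\pm\bar w\in\conv K=\overline{\conv K}$, the standard convexity fact that the segment joining an interior point to a point of the closure lies in the interior (except possibly at that boundary point) gives $z\pm\bar z=\tfrac12 z+\tfrac12(z\pm\bar w)\in\inte\conv K$. As $\inte\conv K$ is convex, the whole segment $[z-\bar z,z+\bar z]$ is then contained in it, and $|\bar z|=\tfrac12|\bar w|\ge \tfrac{r}{2n}=\tfrac{1}{2n}\dist(z,K)$, as required. I expect the only genuinely delicate point to be this last interior/closure passage: one must not merely assert $z\pm\bar w\in\conv K$ and conclude that the open segment is interior, but rather lose the factor $\tfrac12$ precisely in order to guarantee that the \emph{closed} segment sits strictly inside $\conv K$. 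This is exactly where the constant degrades from $1/n$ to $1/(2n)$, and it explains the shape of the claimed bound.
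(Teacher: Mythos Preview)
Your proof is correct and complete; the Carath\'eodory decomposition, the choice of the maximal coefficient $\lambda_m$, the explicit computation $\bar w=-t\,u_m$, and the final halving step to land in the interior are all sound. The paper does not give its own proof of this lemma but simply cites it as Lemma~5.3 of \cite{DLSz2012h}; your argument is essentially the standard one from that reference, so there is nothing to compare.
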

(This is exactly Lemma 5.3 from \cite{DLSz2012h}.)

\subsection{Convex integration}
The following lemma is the main building block of the convex integration scheme:
\begin{lemma}[Perturbation lemma]\label{l:perturb}
Suppose that Assumptions~\ref{a:wc} and \ref{a:Ky} hold and $z$ is a subsolution of \eqref{L} and \eqref{N} such that
\begin{equation*}
\int_\ss{U} \dist^2(z(y),K_y) \, dy = \varepsilon>0.
\end{equation*}
Then there exists $\delta=\delta(\varepsilon)>0$ and a sequence $\{z_k\}_{k\in\NN}$ of subsolutions of \eqref{L} and \eqref{N} such that
\begin{itemize}
\item $z_k = z$ on $\ss D \setminus \ss U$ for any $k\in \NN$
\item $\int_\ss{U} |z-z_k|^2 \, dy \ge \delta$ for any $k\in \NN$
\item $z_k \weakto z$ in $L^2(\ss U)$ as $k\to \infty$.
\end{itemize}
\end{lemma}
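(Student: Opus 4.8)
The plan is to carry out the standard convex-integration perturbation step, but localized with enough care to respect the point-dependence of the constraint sets $K_y$, which is the feature distinguishing this setting from the classical one. The starting observation is that, by Lemma~\ref{l:cont}, the map $y\mapsto\dist(z(y),K_y)$ is continuous on $\ss U$, so the hypothesis $\int_{\ss U}\dist^2(z(y),K_y)\,dy=\varepsilon>0$ forces a set of positive measure on which $z(y)$ sits strictly inside $\inte\conv K_y$. The idea is to add, on many small balls, rescaled localized plane waves oscillating along segments that remain inside the (varying) open convex hulls, thereby raising the $L^2$-distance to $z$ while keeping the subsolution property and producing weak convergence through fast oscillation.

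First I would set up the local building blocks. Fix $y_0\in\ss U$. Since $z(y_0)\in U_{y_0}=\inte\conv K_{y_0}$, Lemma~\ref{l:geom} furnishes $\bar z_0$ with $[z(y_0)-\bar z_0,z(y_0)+\bar z_0]\subset\inte\conv K_{y_0}$ and $|\bar z_0|\ge\frac{1}{2n}\dist(z(y_0),K_{y_0})$. As this closed segment is a compact subset of the open set $\inte\conv K_{y_0}$ and $\Lambda$ is dense (Assumption~\ref{a:wc}), I can replace $\bar z_0$ by a nearby $\bar z_0'\in\Lambda$ for which the perturbed segment still lies in $\inte\conv K_{y_0}$ and $|\bar z_0'|\ge\frac{1}{4n}\dist(z(y_0),K_{y_0})$. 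Invoking now the continuity of $z$, Assumption~\ref{a:Ky}, and the stability Lemma~\ref{l:stab}, I can pick a radius $r_0>0$ so small that on the whole ball $B_{r_0}(y_0)$ the segment $[z(y)-\bar z_0',z(y)+\bar z_0']$ stays inside $\inte\conv K_y$ and $\dist^2(z(y),K_y)\le 2\,\dist^2(z(y_0),K_{y_0})$. A Vitali covering argument then extracts finitely many pairwise disjoint such balls $B_1,\dots,B_N\subset\ss U$, with associated directions $\bar z_j\in\Lambda$, capturing at least half of the mass: $\sum_{j}\int_{B_j}\dist^2(z,K_y)\,dy\ge\varepsilon/2$.

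Next I would insert the plane waves. For each $j$, take the sequence $w^{(j)}_k\in C_0^\infty(B_1(0))$ associated to $\bar z_j$ by Definition~\ref{def:wave cone}, and transplant it to $B_j$ via $\tilde w^{(j)}_k(y):=w^{(j)}_k\big(\tfrac{y-y_j}{r_j}\big)$; since \eqref{L} is a homogeneous constant-coefficient system, translation and dilation preserve solutions, and $\tilde w^{(j)}_k\in C_0^\infty(B_j)$. Setting $z_k:=z+\sum_{j=1}^N\tilde w^{(j)}_k$, the facts that $z_k=z$ on $\ss D\setminus\ss U$, that $z_k$ is continuous on $\ss U$, and that $z_k$ solves \eqref{L} are immediate from linearity and the disjoint supports. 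The $L^2$ lower bound follows from disjointness and the third bullet of Definition~\ref{def:wave cone}: $\int_{\ss U}|z_k-z|^2=\sum_j\int_{B_j}|\tilde w^{(j)}_k|^2\ge C\sum_j|\bar z_j|^2|B_j|\gtrsim C\sum_j\int_{B_j}\dist^2(z,K_y)\,dy\ge\delta$, for a suitable $\delta=\delta(\varepsilon)>0$ assembled from $\varepsilon$, the wave-cone constant $C$, and the dimensional factor $\tfrac{1}{4n}$; and $z_k\weakto z$ in $L^2(\ss U)$ since each $\tilde w^{(j)}_k\weakto0$ and the sum is finite.

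The main obstacle — and the only place where the non-constant constraint genuinely intervenes — is verifying the relaxed inclusion \eqref{N-relaxed}, that $z_k(y)\in\inte\conv K_y$ for every $y\in\ss U$. On $\ss U\setminus\bigcup_j B_j$ one has $z_k=z$, so nothing is needed. On $B_j$ one has $z_k(y)=z(y)+\tilde w^{(j)}_k(y)$, and the uniform convergence $\dist\big(\tilde w^{(j)}_k(y),[-\bar z_j,\bar z_j]\big)\to0$ drives $z_k(y)$ towards the segment $[z(y)-\bar z_j,z(y)+\bar z_j]$, which by construction is a compact subset of the open set $\inte\conv K_y$ for all $y\in B_j$. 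The delicate point is to convert this into a uniform margin: using that there are finitely many balls, that the plane-wave convergence is uniform, and that Lemma~\ref{l:stab} yields a uniform positive distance from the segments to $\d\conv K_y$ over each $B_j$, I can fix $k$ large enough, once and for all, so that $z_k(y)$ stays strictly inside $\inte\conv K_y$ simultaneously for all $y\in\ss U$ (discarding and relabelling the finitely many indices for which this fails, the $L^2$ bound being unaffected). This uniformity across the varying family $\{K_y\}$ is the genuinely new ingredient compared with the classical constant-constraint perturbation lemma.
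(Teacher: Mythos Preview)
Your proposal is correct and follows essentially the same route as the paper's proof: geometric lemma plus density of $\Lambda$ at each point, stability via Lemma~\ref{l:stab} and continuity of $z$ to propagate the segment to a small ball, Vitali covering, rescaled localized plane waves, and the chain of inequalities for the $L^2$ lower bound. The only organizational difference is that the paper fixes a quantitative margin $\rho(y)>0$ \emph{before} choosing the radius, so that $[z(x)-\bar z(y),z(x)+\bar z(y)]+\overline{B_{\rho(y)}(0)}\subset U_x$ holds on the whole ball and the plane waves can be taken with $w_{y,k}(x)\in[-\bar z(y),\bar z(y)]+B_{\rho(y)}(0)$ for every $k$; this makes the inclusion $z_k(y)\in U_y$ automatic for all $k$ and renders your final discarding step unnecessary.
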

\begin{proof}
\emph{Step 1.}
Let $y\in \ss U$. Since $z(y)\in U_y$ we can apply Lemma~\ref{l:geom} to obtain $\bar z_*(y)$ such that
\begin{gather*}
[z(y)-\bar z_*(y), z(y) + \bar z_*(y)] \subset U_y, \\
|\bar z_*(y)| \ge \frac{1}{2n} \dist(z(y),K_y),
\end{gather*}
Since $\Lambda$ is dense in $\RR^n$ and $U_y$ is open we can find $\bar z(y) \in \Lambda$ such that
\begin{gather}
[z(y)-\bar z(y), z(y) + \bar z(y)] \subset U_y, \label{seg}\\
|\bar z(y)| \ge \frac{1}{4n} \dist(z(y),K_y). \label{amplitude}
\end{gather}

Due to \eqref{seg} there exists $\rho(y)>0$
\begin{equation*}
[z(y)-\bar z(y), z(y) + \bar z(y)]+\overline{B_{2\rho(y)}(0)} \subset U_y.
\end{equation*}
Hence using Assumption~\ref{a:Ky}, Lemma~\ref{l:stab} and the  continuity of $z$ we can find $R(y)>0$ such that
\begin{equation}\label{stability}
[z(x)-\bar z(y), z(x) + \bar z(y)]+\overline{B_{\rho(y)}(0)} \subset U_x
\end{equation}
for all $x\in B_{R(y)}(y)\subset \ss U$.
Moreover, in view of Lemma~\ref{l:cont} we can choose $R(y)$ in such a way that
\begin{equation}\label{stability-dist}
\dist(z(x), K_x) \le 2 \dist(z(y),K_y)
\end{equation}
for all $x\in B_{R(y)}(y)$.

Using Assumption~\ref{a:wc} for any fixed $y\in \ss U$ we can construct a sequence $\{w_{y,k}\}_{k\in \NN} \subset C_0^\infty(B_1(0))$ such that
\begin{itemize}
\item $w_{y,k}(x) \in [-\bar z(y), \bar z(y)]+B_{\rho(y)}(0)$ for all $x\in B_1(0)$ and $k\in \NN$,
\item $w_{y,k} \weakto 0$ in $L^2$ as $k\to \infty$,
\item $\int |w_{y,k}|^2 \, dx > C |\bar z(y)|^2$ for all $k\in \NN$.
\end{itemize}

\emph{Step 2.}
Let $\eps:=\int_{\ss U} \dist^2 (z(y), K_y) \, dy$.
The balls $\setof{B_r(y)}{y\in \ss U, \; r\in (0,R(y))}$ cover $\ss U$, so using Vitali's covering theorem
(see e.g. \cite{Bogachev}, Theorem~5.5.2)
and the absolute continuity of the Lebesgue integral we can find finitely many points $\{y_i\}_{i=1..N}\subset \ss U$ and radii $r_i\in(0,R(y_i))$ such that
\begin{equation}\label{exhaustion}
\sum_{i=1}^{N} \int_{B_i} \dist^2(z(y), K_y) \, dy > \frac{1}{2} \eps,
\end{equation}
where the balls $B_i:= B_{r_i}(y_i)$ are pairwise disjoint.

For each $i=1..N$ let us introduce the scaled and translated perturbations $w_{i,k}(x):=w_{y_i,k}(\frac{x-y_i}{r_i})$. These functions belong to $C_0^\infty(B_i)$ and satisfy
\begin{enumerate}
\item[(i)] $w_{i,k}(x) \in [-\bar z(y_i), \bar z(y_i)]+B_{\rho(y_i)}(0)$ for all $x\in B_i$,
$k\in \NN$, $i= 1..N$;
\item[(ii)] $w_{i,k} \weakto 0$ in $L^2$ as $k\to \infty$ (for each fixed $i= 1..N$);
\item[(iii)] $\int |w_{i,k}|^2 \, dx > C |\bar z(y_i)|^2 \Le^D(B_i)$ for all $k\in \NN$.
\end{enumerate}

In view of (i) and \eqref{stability} we have $z(x)+ w_{i,k}(x)\in U_x$ for all $x\in \ss U$ and $i=1..N$, hence $z+w_{i,k}\in X_0$.
Since the balls $B_i$ are pairwise disjoint the function 
\begin{equation*}
z_k:= z+ \sum_{i=1}^N w_{i,k}
\end{equation*}
also belongs to $X_0$.

Using successively (iii), \eqref{amplitude}, \eqref{stability-dist} and \eqref{exhaustion}
we obtain:
\begin{multline*}
\int_{\ss U} |z - z_k|^2 \, dy = \sum_{i=1}^N \int_{B_i} |w_{i,k}(y)|^2 \, dy
\stackrel{\text{(iii)}}{>} C\sum_{i=1}^N |\bar z(y_i)|^2 \Le^D(B_i)  \\
\stackrel{\eqref{amplitude}}{\ge} \frac{C}{16n^2}\sum_{i=1}^N \dist^2(z(y_i), K_{y_i}) \Le^D(B_i) =
\frac{C }{16n^2}\sum_{i=1}^N \int_{B_i} \dist^2(z(y_i), K_{y_i}) \, dx \\
\stackrel{\eqref{stability-dist}}{>}  \frac{C}{32n^2} \sum_{i=1}^N \int_{B_i} \dist^2(z(x), K_x ) \, dx \stackrel{\eqref{exhaustion}}{>} \frac{C}{64n^2} \eps.
\end{multline*}
It remains to observe that since $N$ is finite and the points $y_i$ are fixed we have $z_k \weakto z$ in $L^2$ as $k\to \infty$.
\end{proof}

\subsection{Proof of Theorem~\ref{t:ci}}

We are now ready to prove our main abstract theorem.

\begin{proof}[Proof of Theorem~\ref{t:ci}]
Let $X_0$ denote a set of all subsolutions of \eqref{L} and \eqref{N} which agree with $z_0$ on $\ss D \setminus \ss U$. 
Let $X$ be the closure of $X_0$ in the weak topology of $L^2(\ss U)$, 
endowed with the corresponding induced weak topology. Clearly any $z\in X$ solves \eqref{L} and satisfies \eqref{a:Ky} a.e. on $\ss D \setminus \ss U$.


For any $z\in X$ let us define
\begin{equation*}
I(z):= \int_\ss{U} |z(y)|^2 \, dy.
\end{equation*}
This functional is a Baire-1 function on $X$. Indeed, for any $j\in \NN$ let
\begin{equation*}
I_j(z) := \int_{\setof{y\in \ss U}{\dist(y,\d U)>1/j}} |(\omega_{1/j} * z)(y)|^2 \, dy
\end{equation*}
where for any $\eps>0$ we denote by $\omega_\eps(\cdot) = \eps^{-D} \omega(\cdot/\eps)$ the standard convolution kernel. Then for any $j\in \NN$ the functional $I_j$ is continuous on $X$, and for any $z\in X$ we have $I_j(z) \to I(z)$ as $j\to \infty$.

In view of Assumption~\ref{a:Ky} $X$ is a \emph{bounded} subset of $L^2(\ss U)$.
Since the weak topology is metrizable on the norm-bounded subsets of $L^2(\ss U)$,
we can consider $X$ as a complete metric space with some metric $d_X$.

Then by Baire category theorem (see also Theorem~7.3 from \cite{Oxt}) the set
\begin{equation*}
Y:=\setof{z\in X}{I \text{ is continuous at } z}
\end{equation*}
is residual in $X$ (and hence is infinite). We claim that $z\in Y$ implies $J(z)=0$,
where
\begin{equation*}
J(z):= \int_\ss{U} \dist^2(z(y),K_y) \, dy.
\end{equation*}

Indeed, suppose that $J(z)=\eps>0$ for some $z\in Y$. Let $z_j \in X_0$ be a sequence such that $z_j \weakto z$ in $L^2(\ss U)$ as $j\to \infty$. Since $I$ is continuous at $z$ this implies that $I(z_j)\to I(z)$ and consequently $z_j \to z$ in $L^2(\ss U)$ as $j\to \infty$.

Then in view of Assumption~\ref{a:Ky} we have $J(z_j) \to J(z)$ as $j\to \infty$ and hence without loss of generality we can assume that $J(z_j) > \eps/2$ for all $j\in \NN$.

Applying Lemma~\ref{l:perturb} to $z_j$ for each $j\in \NN$ we can find $\wave z_j \in X_0$ such that $d_X(\wave z_j,z_j)<2^{-j}$ and $\int_{\ss U} |\wave z_j - z_j|^2 \, dy \ge \delta > 0$,
where $\delta=\delta(\eps)$ is independent of $j$.

Since $d_X(\wave z_j, z) \le d_X(\wave z_j, z_j) + d_X(z_j, z) \to 0$ as $j\to \infty$ we also have $\wave z_j \weakto z$ in $L^2$. Since $z$ is a point of continuity of $I$ we also have $z_j \to z$ in $L^2(\ss U)$ as $j\to \infty$. But then $\wave z_j - z_j \to 0$ in $L^2(\ss U)$, which contradicts the construction of $\wave z_j$.
\end{proof}

\section{Application to the continuity equation}\label{application}

In this section we apply our abstract framework to the case of the continuity equation.

\begin{theorem} \label{t:antirenorm}
Suppose that $d \ge 2$. Let $E\colon \RR \to \RR$ be a non-negative bounded function which is continuous on some bounded open interval $I\subset \RR$ and vanishes on $\RR \setminus I$. Then there exist infinitely many bounded, compactly supported $u\colon \RR \times \RR^d \to \RR$ and $\bold b \colon \RR \times \RR^d \to \RR^d$ which satisfy \eqref{continuity} and \eqref{div-free} in sense of distributions and such that 
\begin{equation*}
\int_{\RR^2} u^2(t,x) \, dx =  E(t) \qquad \textrm{for a.e. }t\in I.
\end{equation*}
\end{theorem}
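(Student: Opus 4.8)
The plan is to recast \eqref{continuity}--\eqref{div-free} as a differential inclusion of the form \eqref{L}--\eqref{N} and to apply Theorem~\ref{t:ci}. First I would introduce the flux $\mathbf m := u\mathbf b$ as an independent unknown, working with $z=(u,\mathbf b,\mathbf m)\in\RR^{2d+1}$ on the space--time domain with coordinate $y=(t,x)\in\RR^{d+1}$, so that $D=d+1$ and $n=2d+1$. The product nonlinearity is thereby pushed into the pointwise constraint, and the surviving equations
\[
\d_t u + \div \mathbf m = 0, \qquad \div \mathbf b = 0
\]
(where $\div$ denotes the spatial divergence) are two linear relations of the form \eqref{L}. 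To encode the prescribed energy I exploit that $\ss E$ is an integral in $x$: I fix a box $\Omega\subset\RR^d$ with $|\Omega|=1$, set $\ss U:=\setof{(t,x)}{t\in I,\ E(t)>0,\ x\in\Omega}$ (an open set since $\{E>0\}$ is open in $I$), write $r(t):=\sqrt{E(t)}$, and define
\[
K_y:=\setof{\rb{\sigma r(t),\ \nu,\ \sigma r(t)\,\nu}}{\sigma\in\{-1,1\},\ \nu\in S^{d-1}}
\]
for $y=(t,x)\in\ss U$, while $K_y:=\{0\}$ otherwise. Any $z$ with $z(y)\in K_y$ a.e.\ then has $\mathbf m=u\mathbf b$ and $u(t,x)^2=E(t)$ on $\ss U$ and vanishes off $\ss U$, whence $\int_{\RR^d}u^2(t,\cdot)\,dx=E(t)\,|\Omega|=E(t)$ for a.e.\ $t\in I$, the case $E(t)=0$ being taken care of by the vanishing of $u$ outside $\ss U$.

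Next I would check the hypotheses of Theorem~\ref{t:ci}. Assumption~\ref{a:Ky} holds on $\ss U$ because there $K_y$ depends on $y$ only through the continuous, bounded function $r(t)=\sqrt{E(t)}$ with $r(t)>0$, so $y\mapsto K_y$ is continuous and bounded in $d_{\cc H}$. For Assumption~\ref{a:wc} I take
\[
\Lambda:=\setof{(\bar u,\bar{\mathbf b},\bar{\mathbf m})}{\exists\,\lambda=(\lambda_0,\lambda_x)\neq 0:\ \lambda_0\bar u+\lambda_x\cdot\bar{\mathbf m}=0,\ \lambda_x\cdot\bar{\mathbf b}=0},
\]
which is dense in $\RR^{2d+1}$ precisely because $d\ge 2$ furnishes a nonzero $\lambda_x\perp\bar{\mathbf b}$, after which $\lambda_0$ is fixed by the first relation whenever $\bar u\neq 0$ (a dense condition). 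The localized plane waves are obtained in the usual way: both divergence constraints (the space--time one on $(u,\mathbf m)$ and the spatial one on $\mathbf b$) are resolved through constant antisymmetric potentials which, oscillated along $\lambda$ against a cutoff in $C_0^\infty(B_1(0))$, produce $w_k\weakto 0$ with the lower--order terms tending to zero uniformly, so that $\dist(w_k,[-\bar z,\bar z])\to 0$ and the energy lower bound holds; this is exactly the construction used in \cite{DlSz2008DI}.

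It remains to supply a subsolution, and I propose $z_0\equiv 0$. This solves \eqref{L}, is continuous, and lies in $K_y=\{0\}$ off $\ss U$; the point is that $0\in U_y=\inte\conv K_y$ for every $y\in\ss U$. I would verify this by exhibiting explicit two--point averages of elements of $K_y$, namely $\tfrac12(r,e_j,r e_j)+\tfrac12(-r,-e_j,r e_j)=(0,0,r e_j)$, $\tfrac12(r,e_j,r e_j)+\tfrac12(-r,e_j,-r e_j)=(0,e_j,0)$, and $\tfrac12(r,e_1,r e_1)+\tfrac12(r,-e_1,-r e_1)=(r,0,0)$, which together with their sign--flips realise the $4d+2$ vertices $(\pm r,0,0)$, $(0,\pm e_j,0)$, $(0,0,\pm r e_j)$ of a full--dimensional cross--polytope centred at the origin; hence $0$ is interior as soon as $r(t)>0$, i.e.\ throughout $\ss U$. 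Taking $\ss D$ to be a bounded box containing $\overline{\ss U}$ and applying Theorem~\ref{t:ci} then produces infinitely many weak solutions $z=(u,\mathbf b,\mathbf m)$ of \eqref{L} that agree with $z_0$ (hence vanish) off $\ss U$ and satisfy $z\in K_y$ a.e. Since $\mathbf m=u\mathbf b$ on $K_y$, the extracted pairs $(\mathbf b,u)$ solve \eqref{continuity}--\eqref{div-free} distributionally, are bounded (as the $K_y$ are uniformly bounded) and compactly supported (extending by zero to $\RR^{d+1}$), realise $\ss E(t)=E(t)$ by the first paragraph, and are pairwise distinct because $z$ is recovered from $(u,\mathbf b)$.

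The step I expect to be the main obstacle is the construction of the localized plane waves satisfying the refined requirement of Definition~\ref{def:wave cone}, in particular $\dist(w_k,[-\bar z,\bar z])\to 0$ together with the uniform energy lower bound, since this is where the precise algebraic structure of the coupled continuity/divergence system must be used; by comparison, the density of $\Lambda$ and the cross--polytope verification that $0\in\inte\conv K_y$ are elementary.
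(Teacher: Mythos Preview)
Your proposal is correct and follows the same convex-integration route as the paper: linearize via $\mathbf m=u\mathbf b$, verify the wave-cone and continuity assumptions, take $z_0\equiv 0$ as the subsolution, and invoke Theorem~\ref{t:ci}. Your restriction of $\ss U$ to $\{E>0\}\times\Omega$ together with the explicit cross-polytope check that $0\in\inte\conv K_y$ is in fact a little cleaner than the paper's choice $\ss U=I\times\Omega$ with its one-line ``by symmetry'' justification, which would break down at times $t\in I$ where $E(t)=0$.
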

\begin{remark}
It is well-known that a representative of $u$ can be chosen such that the map $t \mapsto u(t,\cdot)$ is continuous with values in $L^2$ equipped with the weak topology. Then the question arises whether the assertion in the theorem holds even for \emph{every}, and not just almost every, time $t$. We expect this to be true: indeed this should follow by methods similar to those of~\cite{DlSz2010AC}. We will however not pursue this question further in this article. 
\end{remark}
\begin{remark}
When $d=2$ and $f$ is a characteristic function of an interval, the statement of Theorem~\ref{t:antirenorm}, essentially, follows from the example constructed in \cite{Depauw}. This particular case of Theorem~\ref{t:antirenorm} was also proved in \cite{Gus2011} using the convex integration method.
\end{remark}


\begin{remark}
A similar problem can be addressed for more general equation of the form $\div (u \bold B)=0$ instead of \eqref{continuity}.
For this equation the problem is stated as follows: given a distribution $g$ is it possible to construct compactly supported bounded functions $u\colon \RR^n \to \RR$, $\bold B\colon \RR^n \to \RR^n$ such that
\begin{equation*}
\div ( u \bold B ) = 0, \quad \div \bold B = 0, \quad \div (u^2 \bold B) = g \; ?
\end{equation*}
This is related to the so-called \emph{chain rule problem} for the divergence operator \cite{ADM}.
When $n=2$ such a construction is not possible for $g\ne 0$ in view of \cite{BG},
while for $n\ge 3$ it is possible and is obtained using convex integration and rank-2 laminates in \cite{CGSWRD}.
\end{remark}

Let us put the continuity equation in the framework of the previous section.
Fix a bounded open set $\Omega \subset \RR^d$.
Let $\ss U:= I\times\Omega$ and 
\[
F(t,x) := \frac{E(t)}{\Le^d(\Omega)} \mathbf{1}_{\Omega}(x),
\]
where $\mathbf{1}_{\Omega}$ denotes the characteristic function of $\Omega$.

We consider equations \eqref{continuity} and \eqref{div-free} as a linear system
\begin{gather}
\d_t u + \div_x \bold m = 0, \label{L-continuity} \\
\div_x \bold b = 0 \label{L-div-free}
\end{gather}
in $\ss D:= \RR \times \RR^d$
with $u\colon \ss D \to \RR$, $\bold m \colon \ss D \to \RR^d$ and $\bold b \colon \ss D \to \RR^d$
such that $z:=(u, \bold m, \bold b)$ satisfies the constraint
\begin{equation}
z(y) \in K_y := 
\begin{cases}
\setof{(u, \bold m, \bold b)}{\bold m = u \bold b, \quad
|\bold b| = 1, \quad u^2 = F(y)}
& \text{if \;\;} y\in \ss U
\\
0
& \text{if \;\;} y\in \ss D \setminus \ss U
\end{cases}
\label{N-continuity}
\end{equation}
for a.e. $y = (x,t) \in \ss D$.

Suppose that $z=(u,\bold m, \bold b)\in \Linf(\ss D)$ satisfies \eqref{L-continuity} and \eqref{L-div-free} in sense of distributions and moreover \eqref{N-continuity} holds a.e.~in $\ss D$. Then the couple $(u, \bold b)$ satisfies the assertion of Theorem~\ref{t:antirenorm}. 


\medskip

Let us check the assumption of Theorem~\ref{t:antirenorm}.

\begin{lemma} \label{l:dH-est}
Suppose that $A,B\subset \RR^n$ are compact sets and $r>0$ is such that
\begin{itemize}
\item for any $z \in A$ there exists $z' \in B \cap B_r(z)$
\item for any $z \in B$ there exists $z' \in A \cap B_r(z)$
\end{itemize}
Then $d_\cc{H}(A,B) < r$.
\end{lemma}

\begin{proof}
Suppose that $d_\cc{H}(A,B) \ge r$. Then without loss of generality we can assume that there exists $z \in A$ such that for any $z'\in B$ we have $z \notin B_r(z')$. But then the ball $B_r(z)$ cannot contain any point of $B$, which leads to a contradiction.
\end{proof}

\begin{lemma} \label{l:Ky-cont}
If $F \colon \ss U\to \RR$ is continuous, bounded and non-negative then the map
$y \mapsto K_y$ is continuous and bounded (w.r.t. $d_\cc{H}$) on $\ss U$.
\end{lemma}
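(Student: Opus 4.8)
The plan is to exploit the fact that $K_y$ depends on $y$ only through the scalar value $F(y)$, and then reduce both claims to the elementary continuity of the square root. For $s\ge 0$ write
\[
\Phi(s):=\setof{(u,u\bold b,\bold b)}{u^2=s,\; |\bold b|=1},
\]
so that $K_y=\Phi(F(y))$ for every $y\in\ss U$. The whole lemma will follow once we establish the single estimate
\[
d_\cc{H}(K_y,K_{y'})\le \sqrt{2}\,\abs{\sqrt{F(y)}-\sqrt{F(y')}},
\]
together with a uniform bound on the sets $K_y$.

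Boundedness is immediate: for any $(u,u\bold b,\bold b)\in K_y$ we have $u^2=F(y)$ and $|\bold b|=1$, whence $\abs{(u,u\bold b,\bold b)}^2=u^2+u^2|\bold b|^2+|\bold b|^2=2F(y)+1\le 2\sup_{\ss U}|F|+1$. Thus every $K_y$ lies in a fixed ball, and the family is bounded in the Hausdorff metric. For the key estimate I would apply Lemma~\ref{l:dH-est} with $A=K_y$, $B=K_{y'}$ and $r$ any number exceeding the right-hand side above. Given a point $(u,u\bold b,\bold b)\in K_y$, so $u=\pm\sqrt{F(y)}$, I pair it with $(u',u'\bold b,\bold b)\in K_{y'}$, choosing $u'=\pm\sqrt{F(y')}$ with the \emph{same} sign as $u$ (the sign being irrelevant where the value vanishes). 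A direct computation gives
\[
\abs{(u,u\bold b,\bold b)-(u',u'\bold b,\bold b)}^2=(u-u')^2\rb{1+|\bold b|^2}=2(u-u')^2=2\rb{\sqrt{F(y)}-\sqrt{F(y')}}^2.
\]
Since the roles of $y$ and $y'$ are symmetric, both hypotheses of Lemma~\ref{l:dH-est} are satisfied for every such $r$, and letting $r$ decrease to the bound yields the displayed estimate.

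Finally, continuity of $y\mapsto K_y$ follows by composition: $y\mapsto F(y)$ is continuous by assumption, $s\mapsto\sqrt{s}$ is continuous on $[0,\infty)$, and the key estimate shows that $d_\cc{H}(K_y,K_{y'})$ is controlled by $\abs{\sqrt{F(y)}-\sqrt{F(y')}}$; hence $d_\cc{H}(K_y,K_{y'})\to 0$ as $y'\to y$. I do not expect a serious obstacle here. The only point requiring a moment's care is that the square root is not Lipschitz near the origin, so one should not attempt to produce a Lipschitz (or Hölder) modulus for $y\mapsto K_y$; but the statement asks only for continuity, for which the continuity of $\sqrt{\,\cdot\,}$ suffices, and the same remark explains why the sign must be matched rather than estimated crudely near the zero set of $F$.
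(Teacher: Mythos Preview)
Your argument is correct and essentially identical to the paper's: both parametrize $K_y$ by a sign $\sigma\in\{\pm1\}$ and a unit vector $\bold b$, pair $(\sigma\sqrt{F(y)},\sigma\sqrt{F(y)}\bold b,\bold b)$ with $(\sigma\sqrt{F(y')},\sigma\sqrt{F(y')}\bold b,\bold b)$, and invoke Lemma~\ref{l:dH-est}. You obtain the sharper constant $\sqrt{2}$ (the paper uses $2$) and you spell out the boundedness claim explicitly, which the paper leaves implicit.
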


\begin{proof}
Let $f(y):=\sqrt{F(y)}$.
Let us fix $y\in \ss U$.
For any $\eps >0$ let $\delta >0 $ be such that $|f(y)-f(y')|<\eps$ for any $y'\in B_\delta(y) \subset \ss U$.
Let us prove that $d_\cc{H}(K_{y}, K_{y'})< 2\eps $ for all $y'\in B_\delta(y)$.

For any $z \in K_y$ there exist $\sigma \in \{\pm 1\}$ and $\bold b\in \RR^d$ with $|\bold b|=1$ such that $z=(\sigma f(y), \sigma f(y) \bold b, \bold b)$.
Then $z':=(\sigma f(y'), \sigma f(y') \bold b, \bold b)$ belongs to $K_{y'}$
and $|z-z'|\le 2|f(y)-f(y')|$. Hence there exists $z' \in K_{y'} \cap B_{2\eps}(z)$.

Similarly, for any $z' \in K_{y'}$ there exist $\sigma \in \{\pm 1\}$ and $\bold b\in \RR^d$ with $|\bold b|=1$ such that $z'=(\sigma f(y'), \sigma f(y') \bold b, \bold b)$.
Then $z:=(\sigma f(y), \sigma f(y) \bold b, \bold b)$ belongs to $K_{y}$
and $|z-z'|\le 2|f(y)-f(y')|$. Hence there exists $z \in K_{y} \cap B_{2\eps}(z')$.

Therefore by Lemma~\ref{l:dH-est} we have $d_\cc{H}(K_{y},K_{y'})< 2\eps$.
\end{proof}

\begin{lemma} \label{l:wc}
Assumption \ref{a:wc} holds for the system \eqref{L-continuity}--\eqref{N-continuity}.
\end{lemma}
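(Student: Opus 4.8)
The plan is to exhibit a wave cone $\Lambda$ that is dense in $\RR^n$ (here $n = 1 + d + d = 2d+1$, the dimension of the variable $z = (u, \bold m, \bold b)$) for the linear system \eqref{L-continuity}--\eqref{L-div-free}. The standard route in the Tartar framework is to produce, for each prescribed state $\bar z \in \Lambda$, a family of localized plane waves: oscillatory compactly supported solutions of the homogeneous linear system whose amplitude is essentially $\bar z$, that converge weakly to zero while keeping their $L^2$-norm bounded below. The natural ansatz is a single-frequency plane wave $z(y) = \bar z\, h(\xi \cdot y)$ where $h$ is a suitable periodic profile (for instance $h = \sin$ or a smooth approximation), $\xi \in \RR^D$ is a frequency vector, and $\bar z \in \RR^n$ is a constant amplitude. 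Plugging this ansatz into \eqref{L-continuity}--\eqref{L-div-free} reduces the differential constraints to algebraic conditions on the pair $(\bar z, \xi)$, and the set of admissible amplitudes $\bar z$ constitutes the wave cone.

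First I would write the system in the variables $y = (t, x) \in \RR \times \RR^d$ and $z = (u, \bold m, \bold b)$. Substituting the plane-wave ansatz, equation \eqref{L-continuity} becomes $\bar u\, \xi_t + \bar{\bold m} \cdot \xi_x = 0$ and equation \eqref{L-div-free} becomes $\bar{\bold b} \cdot \xi_x = 0$, where $\xi = (\xi_t, \xi_x)$ with $\xi_x \in \RR^d$. For a fixed frequency direction $\xi$ these are two linear conditions on the $(2d+1)$-dimensional amplitude $\bar z = (\bar u, \bar{\bold m}, \bar{\bold b})$, so the admissible amplitudes form a linear subspace of codimension at most two. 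The wave cone $\Lambda$ is the union of these subspaces over all $\xi \ne 0$. Concretely, one can choose $\xi_x \in \RR^d$ with $|\xi_x|=1$, then pick $\bar{\bold b} \perp \xi_x$ (possible since $d \ge 2$), pick $\bar u$ arbitrary, and set $\xi_t = -\bar{\bold m} \cdot \xi_x / \bar u$ or, more cleanly, choose $\bar{\bold m}$ so that $\bar u \xi_t + \bar{\bold m}\cdot\xi_x = 0$ is solvable.

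The key point for Assumption~\ref{a:wc} is density of $\Lambda$ in $\RR^n = \RR^{2d+1}$. To show this I would verify that for an arbitrary target $(\bar u, \bar{\bold m}, \bar{\bold b}) \in \RR^{2d+1}$ one can choose the frequency $\xi$ so that this amplitude (or one arbitrarily close to it) lies in the corresponding subspace. Since $d \ge 2$, given any $\bar{\bold b}$ one can find a unit vector $\xi_x$ orthogonal to it, which handles \eqref{L-div-free}; then \eqref{L-continuity} is a single scalar equation relating $\xi_t$ to $(\bar u, \bar{\bold m}, \xi_x)$, which can always be satisfied by choosing $\xi_t$ appropriately (after perturbing $\bar z$ slightly if a degenerate case such as $\bar u = 0$ and $\bar{\bold m} \cdot \xi_x = 0$ must be avoided). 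This shows every point of $\RR^n$ is approximable by elements of $\Lambda$, giving density. The construction of the actual sequence $w_k \in C_0^\infty(B_1(0))$ is then standard: one multiplies the plane wave $\bar z\, h(k\, \xi \cdot y)$ by a fixed cutoff and checks the three bullet conditions of Definition~\ref{def:wave cone}, using the Riemann--Lebesgue lemma for weak convergence and the mean value of $h^2$ over a period for the uniform lower bound on $\int |w_k|^2$.

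The main obstacle I anticipate is the careful treatment of the localization: a pure plane wave is not compactly supported, and truncating it by a cutoff introduces boundary error terms that violate the differential constraints \eqref{L-continuity}--\eqref{L-div-free} exactly. The standard remedy is to work with potentials—to represent the plane-wave solution as a high-order derivative of a compactly supported vector potential, so that the cutoff error is lower order in the frequency $k$ and can be absorbed—or to use the second-order formulation that the references \cite{DlSz2008DI,Sz2012RPM} employ. I would therefore spend most of the effort confirming that the linear operator in \eqref{L-continuity}--\eqref{L-div-free} admits a suitable potential (a divergence/curl-type representation), so that genuinely compactly supported exact solutions $w_k$ with the required oscillation and amplitude can be built, and that the lower bound $\int |w_k|^2 > C|\bar z|^2$ holds with a constant $C$ uniform in $\bar z \in \Lambda$.
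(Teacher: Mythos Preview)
Your plan is correct and follows essentially the same route as the paper: identify the algebraic plane-wave conditions $\bar u\,\xi_t + \bar{\bold m}\cdot\xi_x = 0$ and $\bar{\bold b}\cdot\xi_x = 0$, use $d\ge 2$ to pick $\xi_x\perp\bar{\bold b}$ and then solve for $\xi_t$ (perturbing away from $\bar u=0$ when $d=2$), and localize via a potential so that the cutoff error is $O(1/k)$. The paper carries this out explicitly, writing the $(u,\bold m)$-component as $\bar{\bold a}(\hat{\bold n}\cdot\nabla)(\phi\Pi_k)-\hat{\bold n}(\bar{\bold a}\cdot\nabla)(\phi\Pi_k)$ with $\Pi_k(y)=k^{-1}\sin(k\,\hat{\bold n}\cdot y)$ (and analogously for $\bold b$, using $\nabla_x^\perp$ when $d=2$), which is exactly the potential representation you anticipate; the only organizational difference is that for $d>2$ the paper picks $\xi_x$ orthogonal to \emph{both} $\bar{\bold m}$ and $\bar{\bold b}$ (so $\xi_t=0$ and no perturbation is needed), whereas your scheme would take $\Lambda=\{\bar u\ne 0\}$ uniformly for all $d\ge 2$.
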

\begin{proof}
Let $\phi \colon \ss D \to \RR$ be a non-negative smooth function such that $0 \le \phi \le 1$ on $\ss D$,
$\phi = 0$ on $\ss D \setminus B_1(0)$ and $\phi = 1$ on $B_{1/2}(0)$.

\emph{Part 1.} Suppose that $d > 2$. Let us show that Assumption \ref{a:wc} holds with $\Lambda = \RR^{2d+1}$.
Fix $\bar u \in \RR$, $\bar {\bold m} \in \RR^d$ and $\bar {\bold b} \in \RR^d$ and let $\bar z = (\bar u, \bar {\bold m}, \bar {\bold b})$.
Since $d > 2$ there exists a unit vector $\bold n \in \RR^d$ such that $\bold n \cdot \bar {\bold m} = \bold n \cdot \bar {\bold b} = 0$.
Denote $\hat {\bold n} = (0, \bold n)$, $\bar {\bold a} = (\bar u, \bar {\bold m})$.
For any $k \in \NN$ define $\bar {\bold a}_k \colon \ss D \to \RR^{d+1}$ by
\begin{equation*}
\bar {\bold a}_k(y):= \bar {\bold a} (\hat {\bold n} \cdot \nabla_{y} (\phi \Pi_k))
 - \hat {\bold n} (\bar {\bold a} \cdot \nabla_{y}(\phi \Pi_k))
\end{equation*}
where $y=(t,x)$ and
\begin{equation*}
\Pi_k(y) := \frac{\sin(k \hat {\bold n} \cdot y)}{k}.
\end{equation*}
Observe that
\begin{equation*}
\div_{y} \bar {\bold a}_k = (\bar {\bold a} \cdot \nabla_{y}) (\hat {\bold n} \cdot \nabla_{y}) (\phi \Pi_k)
 - (\hat {\bold n} \cdot \nabla_{y}) (\bar {\bold a} \cdot \nabla_{y})(\phi \Pi_k) = 0.
\end{equation*}
Let $(u_k, \bold m_k)$ denote the components of $\bar {\bold a}_k$, then by the equation above we have $\d_t u_k + \div_x \bold m_k = 0$.

Similarly let
\begin{equation*}
{\bold b}_k(t,x) := \bar {\bold b} ({\bold n} \cdot \nabla_x (\phi \Pi_k))
 - {\bold n} (\bar {\bold b} \cdot \nabla_x(\phi \Pi_k))
\end{equation*}
Then arguing as above $\div \bold b_k = 0$.

Now we introduce $w_k := (u_k, \bold m_k, \bold b_k)$. Then
\begin{equation*}
w_k(y) = \bar z \phi \cos (k \hat {\bold n} \cdot y) + f \Pi_k
\end{equation*}
where $f$ does not depend on $k$ and vanishes on $B_{1/2}(0)$.

On the other hand,
\begin{multline} \label{eq:plane-wave-energy}
\int_{\ss D} |w_k|^2 \, dy \ge \int_{B_{1/2}(0)} |w_k|^2 \, dy
=
\int_{B_{1/2}(0)} |\bar z|^2 \cos^2 (k \hat {\bold n} \cdot y) \, dy
= \\ =
\int_{B_{1/2}(0)} |\bar z|^2 \frac{1 + \cos (2 k \hat {\bold n} \cdot y)}{2} \, dy
\ge
\frac{|\bar z|^2}{4}|B_{1/2}(0)|
\end{multline}
provided that $k$ is sufficiently large.

\emph{Part 2.} Suppose that $d=2$ and fix $\bar z=(\bar u, \bar {\bold m}, \bar {\bold b})\in \RR\times\RR^2\times\RR^2$ with $\bar u\not=0$. Let us look for a localized plane wave in the following form:
\begin{equation*}
w_k=(\bold a_k, \bold b_k)
\end{equation*}
with
\begin{equation*}
\begin{aligned}
&\bold a_k(y)=\nabla_{y}\times\left(\phi \bold A\frac{\sin(k  {\bold n}\cdot y)}{k}\right)\\
&\bold b_k(t,x)=\nabla_{x}^{\perp}\left(\phi \frac{\sin\bigl( k {\bold n}\cdot (t,x) \bigr)}{k}\right)
\end{aligned}
\end{equation*}
where $ {\bold n}=( n_t, {\bold n}_x)\in\RR\times\RR^2$ and $\bold A\in\RR^3$ are to be chosen and $k\in\NN$. Then, by construction 
\begin{equation*}
\div_{y} \bold a_k=0, \qquad \div_{x} \bold b_k=0.
\end{equation*}
Then, we get
\begin{equation*}
w_k =\hat{z}\phi \cos (k  {\bold n} \cdot y) + f\frac{\sin(k {\bold n}\cdot y)}{k} 
\end{equation*}
where $\hat{z}=(\bold A\times {\bold n}, {\bold n}_x^{\perp})$ and $f$ does not depend on $k$ and vanishes on $B_{1/2}(0)$.

In order to have $\hat z = \bar z$ the vectors $\bold A$ and ${\bold n}$ must satisfy
\begin{equation*}
\begin{aligned}
\bold A\times {\bold n}&=(\bar u, \bar {\bold m}),\\
{\bold n}_x^{\perp}&=\bar {\bold b}.
\end{aligned}
\end{equation*}
From the second equation we immediately obtain that ${\bold n}_x=-\bar{\bold b}^{\perp}$. 
Since $\bar u\not= 0$ there exists $n_t$ such that ${\bold n}\perp (\bar u,\bar {\bold m})$. Then, 
we can always find $\bold A$ such that the first equation is satisfied.
It remains to observe that the estimate \eqref{eq:plane-wave-energy} holds also in the considered case.
We thus have verified Assumption~\ref{a:wc} for $\Lambda = \RR^5 \setminus \{ \bar u = 0 \}$.
\end{proof}

\begin{proof}[Proof of Theorem~\ref{t:antirenorm}]
By symmetry of $K_y$ for any $y\in \ss U$ we have $0 \in \inte \conv K_y$. On the other hand $K_y = \{0\}$ for any $y \in \ss D \setminus \ss U$. Therefore $u\equiv 0$, $\bold m \equiv 0$ and $\bold b \equiv 0$ is a subsolution of \eqref{L-continuity}--\eqref{N-continuity}.
Then the result follows from Lemma~\ref{l:cont}, Lemma~\ref{l:wc} and Theorem~\ref{t:ci}.
\end{proof}

\bibliography{energy}

\providecommand{\bysame}{\leavevmode\hbox to3em{\hrulefill}\thinspace}
\providecommand{\MR}{\relax\ifhmode\unskip\space\fi MR }
\providecommand{\MRhref}[2]{%
  \href{http://www.ams.org/mathscinet-getitem?mr=#1}{#2}
}
\providecommand{\href}[2]{#2}
\begin{thebibliography}{ADLM07}

\bibitem[ABC13]{ABC2}
G.~Alberti, S.~Bianchini, and G.~Crippa, \emph{Structure of level sets and
  {S}ard-type properties of {L}ipschitz maps: results and counterexamples},
  Ann. Scuola Norm. Sup. Pisa Cl. Sci. \textbf{5} (2013), 863--902.

\bibitem[ABC14]{ABC1}
\bysame, \emph{A uniqueness result for the continuity equation in two
  dimensions}, J. Eur. Math. Soc. (JEMS) \textbf{16} (2014), 201--234.

\bibitem[AC14]{HW}
L.~Ambrosio and G.~Crippa, \emph{Continuity equations and {ODE} flows with
  non-smooth velocity}, Proc. Roy. Soc. Edinburgh Sect. A \textbf{144} (2014),
  no.~6, 1191--1244.

\bibitem[ADLM07]{ADM}
L.~Ambrosio, C.~De~Lellis, and J.~Mal{\'y}, \emph{On the chain rule for the
  divergence of {BV}-like vector fields: applications, partial results, open
  problems}, Perspectives in nonlinear partial differential equations, Contemp.
  Math., vol. 446, Amer. Math. Soc., Providence, RI, 2007, pp.~31--67.

\bibitem[Aiz78]{A}
M.~Aizenman, \emph{On vector fields as generators of flows: A counterexample to
  {N}elson's conjecture}, Ann. Math. \textbf{107} (1978), 287--296.

\bibitem[Amb]{A2004}
L.~Ambrosio, \emph{Transport equation and {C}auchy problem for {${B}{V}$}
  vector fields}, Invent. Math.

\bibitem[BG14]{BG}
S.~Bianchini and N.~A. Gusev, \emph{Steady nearly incompressible vector fields
  in 2d: chain rule and renormalization}, Preprint (2014).

\bibitem[BLFNL]{Lopes}
A.~Bronzi, M.~Lopes~Filho, and H.~Nussenzveig~Lopes, \emph{Wild solutions for
  2d incompressible ideal flow with passive tracer}, Preprint.

\bibitem[Bog07]{Bogachev}
V.~I. Bogachev, \emph{Measure theory. {V}ol. {I}, {II}}, Springer-Verlag,
  Berlin, 2007.

\bibitem[CFG11]{Cordoba2011}
D.~Cordoba, D.~Faraco, and F.~Gancedo, \emph{Lack of uniqueness for weak
  solutions of the incompressible porous media equation}, Arch. Ration. Mech.
  Anal. \textbf{200} (2011), no.~3, 725--746.

\bibitem[CGSW]{CGSWRD}
G.~Crippa, N.~A. Gusev, S.~Spirito, and E.~Wiedemann, \emph{Failure of the
  chain rule for the divergence of bounded vector fields}, Preprint.

\bibitem[CLR03]{CLR2003}
F.~Colombini, T.~Luo, and J.~Rauch, \emph{Uniqueness and nonuniqueness for
  nonsmooth divergence free transport}, Seminaire: \'{E}quations aux
  {D}\'eriv\'ees {P}artielles, 2002--2003, S\'emin. \'Equ. D\'eriv. Partielles,
  Exp.\ No.\ XXII, \'Ecole Polytech., Palaiseau, 2003, pp.~1--21.

\bibitem[Dep03]{Depauw}
N.~Depauw, \emph{Non unicit\'e des solutions born\'ees pour un champ de
  vecteurs {BV} en dehors d'un hyperplan}, C. R. Math. Acad. Sci. Paris
  \textbf{337} (2003), no.~4, 249--252.

\bibitem[DL89]{DPL}
R.~J. DiPerna and P.-L. Lions, \emph{Ordinary differential equations, transport
  theory and {S}obolev spaces}, Invent. Math. \textbf{98} (1989), no.~3,
  511--547.

\bibitem[DL08]{DLB}
C.~De~Lellis, \emph{Ordinary differential equations with rough coefficients and
  the renormalization theorem of {A}mbrosio [after {A}mbrosio, {D}i{P}erna,
  {L}ions]}, Ast\'erisque (2008), no.~317, Exp. No. 972, viii, 175--203,
  S{\'e}minaire Bourbaki. Vol. 2006/2007.

\bibitem[DLS09]{DlSz2008DI}
C.~De~Lellis and L.~Sz{\'e}kelyhidi, Jr., \emph{The {E}uler equations as a
  differential inclusion}, Ann. of Math. \textbf{170} (2009), no.~3,
  1417--1436.

\bibitem[DLS10]{DlSz2010AC}
\bysame, \emph{On admissibility criteria for weak solutions of the {E}uler
  equations}, Arch. Ration. Mech. Anal. \textbf{195} (2010), no.~1, 225--260.

\bibitem[DLS12]{DLSz2012h}
\bysame, \emph{The {$h$}-principle and the equations of fluid dynamics}, Bull.
  Amer. Math. Soc. (N.S.) \textbf{49} (2012), no.~3, 347--375.

\bibitem[Gus11]{Gus2011}
N.~A. Gusev, \emph{Continuity equation as a differential inclusion},
  Unpublished (2011).

\bibitem[Oxt80]{Oxt}
J.~Oxtoby, \emph{Measure and category}, second ed., Graduate Texts in
  Mathematics, vol.~2, Springer-Verlag, New York-Berlin, 1980, A survey of the
  analogies between topological and measure spaces.

\bibitem[Sch93]{Sch}
V.~Scheffer, \emph{An inviscid flow with compact support in space-time}, J.
  Geom. Anal. \textbf{3} (1993), no.~4, 343--401.

\bibitem[Shn97]{Shn}
A.~Shnirelman, \emph{On the nonuniqueness of weak solution of the {E}uler
  equation}, Comm. Pure Appl. Math. \textbf{50} (1997), no.~12, 1261--1286.

\bibitem[Shv11]{Shvydkoy}
R.~Shvydkoy, \emph{Convex integration for a class of active scalar equations},
  J. Amer. Math. Soc. \textbf{24} (2011), no.~4, 1159--1174. \MR{2813340
  (2012d:35295)}

\bibitem[Sz{\'e}12]{Sz2012RPM}
L.~Sz{\'e}kelyhidi, Jr., \emph{Relaxation of the incompressible porous media
  equation}, Ann. Sci. \'Ec. Norm. Sup\'er. (4) \textbf{45} (2012), no.~3,
  491--509.

\end{thebibliography}
\bibliographystyle{amsalpha}

\end{document}